\newtheorem{theorem}{Theorem}[section]     
\newtheorem{lemma}[theorem]{Lemma}               
\theoremstyle{definition}
\newtheorem{definition}[theorem]{Definition}
\newtheorem*{remarks}{Remarks}
\newtheorem*{acknowledgement}{Acknowledgement}
\newcommand{\CC}{{\mathbb C}}
\newcommand{\DD}{{\mathbb D}}
\newcommand{\RR}{{\mathbb R}}
\newcommand{\TT}{{\mathbb T}}
\newcommand{\cB}{{\cal B}}
\let\Re\undefined
\DeclareMathOperator{\Re}{Re}
\begin{document}

\title{Spectral sets for numerical range}

\author{Hubert Klaja, Javad Mashreghi and Thomas Ransford}

\address{Hubert Klaja, D\'epartement de math\'ematiques et de statistique, Universit\'e Laval, 
1045 avenue de la M\'edecine, Qu\'ebec (QC), Canada G1V 0A6\\
\email{hubert.klaja@gmail.com}}

\address{Javad Mashreghi, D\'epartement de math\'ematiques et de statistique, Universit\'e Laval, 
1045 avenue de la M\'edecine, Qu\'ebec (QC), Canada G1V 0A6\\
\email{javad.mashreghi@mat.ulaval.ca}}

\address{Thomas Ransford, D\'epartement de math\'ematiques et de statistique, Universit\'e Laval, 
1045 avenue de la M\'edecine, Qu\'ebec (QC), Canada G1V 0A6\\
\email{ransford@mat.ulaval.ca}}

\CorrespondingAuthor{Thomas Ransford}

\date{09.08.2016}                             

\keywords{numerical range; spectral set; dilation;}

\subjclass{47A12, 47A25}

\thanks{Research of JM supported by a grant from NSERC. Research of TR supported by grants from NSERC and the Canada Research Chairs program} 
       
\begin{abstract}
We define and study a numerical-range analogue of the notion of spectral set. 
Among the results obtained are a positivity criterion and a dilation theorem,
analogous to those already known for spectral sets. An important difference from the classical definition
is the role played in the new definition by the base point. We present some examples 
to illustrate this aspect.
\end{abstract}

\maketitle

\section{Introduction}

Let $H$ be a complex Hilbert space. 
We denote by $B(H)$ the algebra of bounded linear operators on $H$.
Also, we write $\DD$ for the open unit disk.

\subsection{Spectral sets}

Our starting point is the following well-known inequality of von Neumann \cite{vN51}.

\begin{theorem}\label{T:vN}
Let $T\in B(H)$ with $\|T\|\le1$. 
Then, for every rational function $f$ with poles outside $\overline{\DD}$, we have 
\begin{equation}\label{E:vN}
\|f(T)\|\le\sup_{z\in\overline{\DD}}|f(z)|.
\end{equation}
\end{theorem}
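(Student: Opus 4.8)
The plan is to reduce the general rational statement to the case of polynomials, and then to handle polynomials via a unitary dilation together with the spectral theorem. Since $f$ is rational with all poles outside $\overline{\DD}$, it is holomorphic on a disk $\{|z|<\rho\}$ with $\rho>1$, so its Taylor partial sums $p_n$ converge to $f$ uniformly on a closed disk $\overline{D(0,r)}$ with $1<r<\rho$, a neighbourhood of $\sigma(T)$ (note $\|T\|\le1$ forces $\sigma(T)\subseteq\overline{\DD}$). Uniform convergence on a neighbourhood of $\sigma(T)$ gives $p_n(T)\to f(T)$ in operator norm via the holomorphic functional calculus, while $\sup_{\overline{\DD}}|p_n|\to\sup_{\overline{\DD}}|f|$. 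Hence it suffices to prove $\|p(T)\|\le\sup_{\overline{\DD}}|p|$ for every polynomial $p$.

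For the core step I would invoke a unitary dilation of the contraction $T$: there is a Hilbert space $K\supseteq H$ and a unitary $U\in B(K)$ such that $T^n=P_H U^n|_H$ for every $n\ge0$, where $P_H$ denotes the orthogonal projection of $K$ onto $H$. Granting this, linearity gives $p(T)=P_H\,p(U)|_H$ for any polynomial $p$, whence $\|p(T)\|\le\|p(U)\|$ because compressing cannot increase the norm. Since $U$ is unitary, hence normal with $\sigma(U)\subseteq\TT$, the spectral theorem yields $\|p(U)\|=\sup_{z\in\sigma(U)}|p(z)|\le\sup_{z\in\TT}|p(z)|$, and by the maximum modulus principle $\sup_{\TT}|p|=\sup_{\overline{\DD}}|p|$. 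Combining these gives the desired bound for polynomials, and the reduction above then finishes the proof.

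The main obstacle is the existence of the unitary dilation, which carries the substantive content. I would establish it following Sz.-Nagy: set $D_T=(I-T^*T)^{1/2}$, build the minimal isometric dilation on $H\oplus\ell^2(\mathbb{N};\overline{\operatorname{ran}D_T})$ by a shift-type formula, then dilate the resulting isometry to a unitary on a two-sided sequence space, and verify the power condition $P_H U^n|_H=T^n$ by direct computation. An alternative route that sidesteps the dilation is purely function-theoretic: reduce further to strict contractions by replacing $T$ with $rT$ and letting $r\uparrow1$, use that for $\sup_{\overline{\DD}}|p|\le1$ the Schur kernel $(1-\overline{p(w)}p(z))/(1-\bar wz)$ is positive semidefinite and factors as $\sum_j\overline{h_j(w)}\,h_j(z)$, and then apply the hereditary functional calculus to obtain $I-p(T)^*p(T)=\sum_j h_j(T)^*(I-T^*T)h_j(T)\ge0$, the positivity coming precisely from $I-T^*T\ge0$. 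Either way, the crux is producing the normality (or positivity) that converts the supremum over $\overline{\DD}$ into an operator bound, the hypothesis $\|T\|\le1$ entering exactly through $I-T^*T\ge0$.
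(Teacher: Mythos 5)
Your proof is correct. Note, though, that the paper does not actually prove Theorem~\ref{T:vN}: it is quoted from von Neumann, and the route the paper later sketches for recovering it is via Theorem~\ref{T:ss}, namely the implication (ii)$\Rightarrow$(i) together with the displayed computation showing that the positivity condition $\Re\bigl((I+e^{-i\theta}T)(I-e^{-i\theta}T)^{-1}\bigr)\ge0$ for all $e^{i\theta}\in\TT$ is equivalent to $\|T\|\le1$ (plus the trick of passing to $rT$ and letting $r\to1^-$ to remove the hypothesis $\sigma(T)\subset\DD$). You instead take the dilation road: you construct the Sz.-Nagy unitary dilation directly and then run the easy implication (iii)$\Rightarrow$(i), which the paper itself notes is immediate once a normal dilation with spectrum in $\partial\Omega$ is available. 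Your preliminary reduction from rational $f$ to polynomials is necessary and correctly executed (the power-dilation identity $T^n=P_HU^n|_H$ only yields $p(T)=P_Hp(U)|_H$ for polynomials $p$, not for general rational $f$, so one cannot skip this step), and your fallback argument via positive semidefinite Schur kernels and the factorization $I-p(T)^*p(T)=\sum_j h_j(T)^*(I-T^*T)h_j(T)$ is a legitimate dilation-free alternative. The trade-off is the usual one: the positivity-criterion route used in the paper needs the hard implication (ii)$\Rightarrow$(i) of Theorem~\ref{T:ss} (due to Berger, Foia\c s, Lebow, and Agler--Harland--Raphael) as a black box but then reduces the disk case to a two-line computation, whereas your route is self-contained modulo the Sz.-Nagy--Sch\"affer construction, which you correctly identify as carrying the substantive content.
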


The following definition, also due to von Neumann \cite{vN51}, 
makes abstraction of this property.
We write $\sigma(T)$ for the spectrum of $T$.

\begin{definition}\label{D:ss}
Let $T\in B(H)$. A subset $S$ of $\CC$ is a {\em spectral set} for $T$ 
if $\sigma(T)\subset S$
and if, for every rational function $f$ with poles outside $\overline{S}$, we have
\[
\|f(T)\|\le \sup_{z\in S}|f(z)|.
\]
\end{definition}

Thus Theorem~\ref{T:vN} says that
$\overline{\DD}$ is a spectral set for all $T$ with $\|T\|\le 1$.

Spectral sets have been studied extensively over the years, 
especially in relation to dilation theorems.
The following theorem summarizes some known equivalences.
Given $T\in B(H)$, we write $\Re T:=(T+T^*)/2$, 
where $T^*$ denotes the Hilbert-space adjoint of $T$. 
Also, if $K$ is Hilbert space containing $H$ (as a closed subspace),
then we write $P_H:K\to H$ for the orthogonal projection of $K$ onto $H$. 

\begin{theorem}\label{T:ss}
Let $\Omega$ be a Jordan domain,
and let $T\in B(H)$ with $\sigma(T)\subset \Omega$. 
The following statements are equivalent:
\begin{enumerate}[\rm(i)]
\item $\overline{\Omega}$ is a spectral set for $T$.
\item There exists $z_0\in\Omega$ such that $\Re (h(T))\ge0$ for all 
conformal maps $h$ of $\Omega$ onto the right half-plane with $h(z_0)=1$.
\item
There exist a Hilbert space $K$ containing $H$ 
and a normal operator $N\in B(K)$ such that
$\sigma(N)\subset\partial\Omega$ and such that, 
for every rational function $f$ with poles outside $\overline{\Omega}$, 
we have $f(T)=P_Hf(N)|_H$.
\end{enumerate}
\end{theorem}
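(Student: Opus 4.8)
The plan is to prove the cycle (i)$\Rightarrow$(ii)$\Rightarrow$(iii)$\Rightarrow$(i), after first recasting the half-plane condition in (ii) as a statement about a single contraction. The constructions below all depend on a base point $z_0\in\Omega$, and in each implication I use the relevant one. Given $z_0$, let $\varphi:\DD\to\Omega$ be the Riemann map with $\varphi(0)=z_0$, and set $\psi:=\varphi^{-1}:\Omega\to\DD$. By Carath\'eodory's theorem $\varphi$ extends to a homeomorphism $\overline{\DD}\to\overline{\Omega}$ carrying $\TT$ onto $\partial\Omega$; in particular $\psi$ extends continuously to $\overline{\Omega}$ with $|\psi|<1$ on $\Omega$ and $|\psi|=1$ on $\partial\Omega$, so $\sup_{\overline{\Omega}}|\psi|=1$. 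Since $\sigma(T)\subset\Omega$, the operator $\psi(T)$ is defined by the holomorphic functional calculus and $\sigma(\psi(T))=\psi(\sigma(T))$ is a compact subset of $\DD$. The conformal maps $h$ of $\Omega$ onto the right half-plane with $h(z_0)=1$ are exactly those of the form $h=(1+\lambda\psi)/(1-\lambda\psi)$ with $\lambda\in\TT$; applying the operator Cayley transform $w\mapsto(w-1)/(w+1)$ then shows that $\Re(h(T))\ge0$ for all such $h$ if and only if $\|\psi(T)\|\le1$. Thus (ii) is equivalent to the existence of a base point for which the normalized Riemann map satisfies $\|\psi(T)\|\le1$.

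For (i)$\Rightarrow$(ii) I would fix any $z_0$. Because $\partial\Omega$ is a Jordan curve, $\CC\setminus\overline{\Omega}$ is connected, so by Mergelyan's theorem $\psi$ can be approximated uniformly on $\overline{\Omega}$ by polynomials $p_n$. The spectral-set hypothesis gives $\|p_n(T)\|\le\sup_{\overline{\Omega}}|p_n|$, while uniform convergence on the open neighborhood $\Omega$ of $\sigma(T)$ yields $p_n(T)\to\psi(T)$. Hence $\|\psi(T)\|\le\sup_{\overline{\Omega}}|\psi|=1$, which gives (ii).

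For (ii)$\Rightarrow$(iii) I would take $z_0$ to be the base point provided by (ii) and set $V:=\psi(T)$, a contraction on $H$ with $\sigma(V)\subset\DD$; by the composition rule for the holomorphic functional calculus, $T=\varphi(V)$. Let $U$ on $K\supset H$ be a unitary dilation of $V$ furnished by Sz.-Nagy's theorem, so that $V^n=P_HU^n|_H$ for all $n\ge0$ and hence $p(V)=P_Hp(U)|_H$ for every polynomial $p$. Since $\sigma(U)\subset\TT$ and $\varphi$ is continuous on $\TT$, define $N:=\varphi(U)$ via the continuous functional calculus; then $N$ is normal and $\sigma(N)=\varphi(\sigma(U))\subset\varphi(\TT)=\partial\Omega$. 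For a rational $f$ with poles outside $\overline{\Omega}$, put $g:=f\circ\varphi$, which lies in the disk algebra. I would extend the intertwining from polynomials to $g$ by approximating $g$ uniformly on $\overline{\DD}$ by polynomials and passing to the limit on both sides, using that the continuous functional calculus of $U$ is isometric and that the holomorphic functional calculus of $V$ is continuous. Composing the two calculi gives $g(V)=f(\varphi(V))=f(T)$ and $g(U)=f(\varphi(U))=f(N)$, whence $f(T)=P_Hf(N)|_H$.

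Finally (iii)$\Rightarrow$(i) is immediate: for rational $f$ with poles outside $\overline{\Omega}$ the spectral theorem gives $\|f(N)\|=\sup_{z\in\sigma(N)}|f(z)|\le\sup_{z\in\partial\Omega}|f(z)|=\sup_{z\in\overline{\Omega}}|f(z)|$, the last equality by the maximum principle, so $\|f(T)\|=\|P_Hf(N)|_H\|\le\|f(N)\|\le\sup_{\overline{\Omega}}|f|$. I expect the main obstacle to be the step (ii)$\Rightarrow$(iii): one must transport the Sz.-Nagy dilation of the auxiliary contraction $V$ back to $\Omega$ through $\varphi$, and carefully justify that the compression identity survives the passage from polynomials to the full rational functional calculus, which relies on the boundary values of $\varphi$ supplied by Carath\'eodory's theorem and on composing the holomorphic and continuous functional calculi.
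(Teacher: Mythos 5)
Your argument is correct. Bear in mind, though, that the paper does not actually prove Theorem~\ref{T:ss}: it verifies only the easy implication (iii)$\Rightarrow$(i) and cites Berger, Foia\c s and Lebow for (i)$\Leftrightarrow$(iii) and Agler--Harland--Raphael for the equivalence with (ii). The relevant comparison is therefore with the paper's proof of the analogous Theorem~\ref{T:Wsp}, and there your route through (ii)$\Rightarrow$(iii) is genuinely different. You transplant everything to the disk: (ii) is recast as $\|\psi(T)\|\le1$ for the normalized Riemann map $\psi$, you take a Sz.-Nagy unitary power dilation $U$ of $V=\psi(T)$, push it back to $N=\varphi(U)$ by the continuous functional calculus (using Carath\'eodory's theorem to make sense of $\varphi$ on $\sigma(U)\subset\TT$), and carry the compression identity from polynomials to $f\circ\varphi$ by uniform approximation on $\overline{\DD}$, splitting the limit between the isometric calculus of $U$ and the holomorphic calculus of $V$ --- legitimate since $\sigma(V)$ is a compact subset of $\DD$. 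The paper instead constructs the dilation intrinsically on $\Omega$: the Herglotz-type formula of Lemma~\ref{L:Herglotz} writes $f(T)=\int_{\partial\Omega}h_\zeta(T)\Re f(\zeta)\,d\omega_{z_0}(\zeta)$, positivity of $\Re h_\zeta(T)$ turns the integrand into a positive operator measure, and Naimark's theorem (Lemma~\ref{L:Naimark}) supplies the spectral measure of $N$. What your approach buys is brevity and purely classical one-variable tools; what it costs is that Sz.-Nagy's dilation theorem is consumed as an input, whereas the paper presents Sz.-Nagy's theorem as a corollary of (i)$\Leftrightarrow$(iii) in the disk case, so to avoid circularity you must take an independent proof of it (e.g.\ Sch\"affer's matrix construction) as given. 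The Herglotz--Naimark route has no such dependence, proves the disk and Jordan-domain cases simultaneously, and is the one that adapts, with the extra factor $2$, to the numerical-range setting. Your steps (i)$\Rightarrow$(ii) (Mergelyan plus the Cayley-transform computation) and (iii)$\Rightarrow$(i) coincide with the paper's corresponding arguments.
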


Property (iii) is often expressed by saying that $N$ is a {\em normal dilation} of $T$.
 It clearly implies property (i) since, if (iii) holds, then
\[
\|f(T)\|
=\|P_Hf(N)|_H\|\le \|f(N)\|
=\sup_{z\in \sigma(N)}|f(z)|\le\sup_{z\in\overline{\Omega}}|f(z)|.
\]

The equivalence between properties (i) and (iii) is due independently 
to Berger \cite{Be63}, Foia\c s \cite{Fo59} and Lebow \cite{Le63}. 
The equivalence with the (formally weaker) property (ii) 
was studied by Agler, Harland and Raphael in \cite{AHR08}, 
where one can also find an extensive discussion of 
generalizations of this result to multiply connected domains.

As an example, let us consider the case $\Omega=\DD$. 
A conformal map $h$ of $\DD$ onto the right half-plane such that $h(0)=1$ has the form
$h(z)=(e^{i\theta}+ z)/(e^{i\theta} -z)$.
Thus condition (ii) (with $z_0=0$) becomes 
$\Re((I+e^{-i\theta} T)(I-e^{-i\theta} T)^{-1})\ge0$ for all $e^{i\theta}\in\TT$. 
Now, for each $\theta$, we have
\begin{align*}
\Re&(( I+e^{-i\theta} T)(I-e^{-i\theta} T)^{-1})\ge0\\
&\iff \Re\langle(I+e^{-i\theta} T)(I-e^{-i\theta} T)^{-1}x,x\rangle\ge0 \quad \text{for all~}x\in H\\
&\iff \Re\langle(I+e^{-i\theta} T)y,(I-e^{-i\theta} T)y\rangle\ge0  \quad \text{for all~}y\in H\\
&\iff \|y\|^2-\|Ty\|^2\ge0\quad \text{for all~}y\in H\\
&\iff \|T\|\le1.
\end{align*}
Thus, thanks to the equivalence between (i) and (ii), 
we recover the fact that $\overline{\DD}$ is a spectral set for $T$ if $\|T\|\le 1$.
In addition, the equivalence
between (i) and (iii) shows that, if $\|T\|\le 1$, 
then $T$ has a normal dilation $N$ with $\sigma(N)\subset\TT$,
and so we recover the celebrated theorem of Sz-Nagy \cite{SN53} 
that every contraction has a unitary dilation.
(Our argument supposes that  $\sigma(T)\subset\DD$, 
but this restriction can be removed by considering $rT$ for $r<1$ and then letting $r\to1^-$). 

\subsection{$W$-spectral pairs}

Our purpose in this article is to carry out a similar program for numerical ranges. 
We recall that, given $T\in B(H)$, the {\em numerical range} and {\em numerical radius} 
of $T$ are defined respectively by
\[
W(T):=\{\langle Tx,x\rangle: x\in H,~\|x\|=1\}
\quad\text{and}\quad
w(T):=\sup_{z\in W(T)}|z|.
\]
It is well known that $W(T)$ is a convex set satisfying $\sigma(T)\subset\overline{W(T)}$. 
Also, we always have $\|T\|/2\le w(T)\le\|T\|$. 
For general background on numerical ranges, we refer to
the book of Gustafson and Rao \cite{GR97}.

This time our point of departure is the following result 
due to Berger and Stampfli \cite{BS67} and Kato \cite{Ka65}.

\begin{theorem}\label{T:BSK}
Let $T\in B(H)$ with $w(T)\le1$. 
Then, for each rational function $f$ with poles outside $\overline{\DD}$ and satisfying $f(0)=0$, 
we have
\[
w(f(T))\le \sup_{z\in\overline{\DD}}|f(z)|.
\]
\end{theorem}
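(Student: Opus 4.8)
The plan is to deduce the numerical‑radius bound from a resolvent positivity criterion, in the same spirit as the equivalence (i)$\Leftrightarrow$(ii) of Theorem~\ref{T:ss}. First I would record the elementary fact, proved exactly as in the computation following Theorem~\ref{T:ss}, that for $S\in B(H)$ one has $w(S)\le1$ if and only if $\Re(I-\lambda S)^{-1}\ge0$ for every $\lambda\in\DD$: writing $y=(I-\lambda S)^{-1}x$ one finds $\Re\langle(I-\lambda S)^{-1}x,x\rangle=\|y\|^2-\Re(\lambda\langle Sy,y\rangle)$, which is nonnegative for all $x$ precisely when $w(S)\le1$. The same identity shows that $w(T)\le1$ already forces $\Re(I-\bar\zeta T)^{-1}\ge0$ for every $\zeta\in\TT$ (at least once $\sigma(T)\cap\TT=\emptyset$). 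After rescaling we may assume $\sup_{\overline\DD}|f|=1$, and the goal becomes $\Re(I-\lambda f(T))^{-1}\ge0$ for each fixed $\lambda\in\DD$.

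Fix such a $\lambda$ and set $g:=\lambda f$, a rational function with poles off $\overline\DD$, with $g(0)=0$ and $\|g\|_{\overline\DD}\le|\lambda|<1$. The key idea is to represent not $f$ itself but the scalar resolvent $G:=(1-g)^{-1}$. Because $|g|<1$ on $\overline\DD$, the value $1-g$ lies in the disk $\{|w-1|<1\}$, so $G$ maps $\overline\DD$ into the half‑plane $\{\Re>1/2\}$; moreover $G(0)=1$. Hence $F:=2G-1$ is a Carath\'eodory function, $\Re F\ge0$ with $F(0)=1$, and the Herglotz representation gives a positive measure $\mu$ on $\TT$ with
\[
F(z)=\int_\TT\frac{\zeta+z}{\zeta-z}\,d\mu(\zeta),\qquad \mu(\TT)=\Re F(0)=1.
\]
Using $\tfrac12(\zeta+z)/(\zeta-z)=(1-\bar\zeta z)^{-1}-\tfrac12$ together with $\mu(\TT)=1$, this collapses to the clean formula $G(z)=\int_\TT(1-\bar\zeta z)^{-1}\,d\mu(\zeta)$, with no additive constant.

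This is exactly the step where the hypothesis $f(0)=0$ is indispensable: it yields $g(0)=0$, hence $G(0)=1$ and $\mu(\TT)=1$, so that the would‑be additive term $\tfrac12(1-\mu(\TT))$ vanishes. If instead $f(0)\ne0$ one gets $\mu(\TT)=2\Re(1-g(0))^{-1}-1>1$, leaving a strictly negative real constant that destroys the positivity below — in keeping with the fact that the theorem is false without $f(0)=0$. Granting the representation, I would substitute $T$ through the Riesz--Dunford functional calculus (all functions in sight being analytic on a neighbourhood of $\sigma(T)\subset\overline\DD$, and the interchange with the integral being justified by approximating it with Riemann sums):
\[
(I-\lambda f(T))^{-1}=G(T)=\int_\TT(I-\bar\zeta T)^{-1}\,d\mu(\zeta).
\]
Taking real parts and using $\Re(I-\bar\zeta T)^{-1}\ge0$ for every $\zeta\in\TT$ together with $\mu\ge0$ yields $\Re(I-\lambda f(T))^{-1}\ge0$. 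As $\lambda\in\DD$ was arbitrary, the criterion gives $w(f(T))\le1$.

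The routine obstacles are the boundary technicalities: the resolvents $(I-\bar\zeta T)^{-1}$ need not exist when $\sigma(T)$ meets $\TT$, and $(I-\lambda f(T))^{-1}$ must be controlled up to $|\lambda|\to1$. I would dispose of both by first running the whole argument with $T$ replaced by $rT$ for $0<r<1$ (so that $w(rT)<1$ and $\sigma(rT)\subset\DD$) and then letting $r\to1^-$. The genuine conceptual point — and the only nonroutine step — is the choice to expand the resolvent $(1-\lambda f)^{-1}$ rather than $f$, since this is what places the relevant scalar function in the Carath\'eodory class and makes $f(0)=0$ act precisely as the normalization $\mu(\TT)=1$; the naive attempt to represent $f$ (or $1-f$) directly loses a factor of $2$ and only gives $w(f(T))\le2$.
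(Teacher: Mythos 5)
Your proof is correct, but it follows a genuinely different route from the paper's. The paper does not prove Theorem~\ref{T:BSK} directly: it recovers it as the special case $\Omega=\DD$, $z_0=0$ of the implication (ii)$\Rightarrow$(i) in Theorem~\ref{T:Wsp}, whose proof passes through an operator-valued Herglotz formula (Lemma~\ref{L:Herglotz}), the construction of a positive operator measure $B\mapsto\tfrac12\int_B\Re(h_\zeta(T)+I)\,d\omega_{z_0}$, and Naimark's dilation theorem (Lemma~\ref{L:Naimark}) to produce a unitary $N$ with $f(T)=2P_Hf(N)|_H$; positivity of $\Re(I-\alpha f(T))^{-1}$ is then read off from the dilation. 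You instead keep everything at the scalar level: you observe that $G=(1-\lambda f)^{-1}$ is (up to the affine normalization $F=2G-1$) a Carath\'eodory function with $F(0)=1$, expand it by the classical Herglotz theorem as $G(z)=\int_\TT(1-\bar\zeta z)^{-1}\,d\mu(\zeta)$ with $\mu\ge0$ and $\mu(\TT)=1$, push this through the Riesz--Dunford calculus, and invoke only the elementary positivity $\Re(I-\bar\zeta T)^{-1}\ge0$ coming from $w(T)\le1$. This avoids dilation theory entirely and is essentially Kato's original argument; what it buys is elementarity and a transparent explanation of where $f(0)=0$ enters (it forces $\mu(\TT)=1$, exactly as it forces $F(X)=I$ in the paper's operator-measure construction), while the paper's heavier route buys the full equivalence with the $2$-dilation in condition (iii) and the generalization to arbitrary Jordan domains. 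The interchange of the functional calculus with $\int_\TT\,d\mu$, and the regularization $T\mapsto rT$, $r\to1^-$, are handled adequately. One small inaccuracy in a side remark: when $f(0)\ne0$ you claim $\mu(\TT)=2\Re(1-g(0))^{-1}-1>1$; this need not exceed $1$ (take $g(0)$ negative), and there is also an imaginary additive constant $i\Im F(0)$ to account for. The correct statement is merely that the extra constant $\tfrac12(1-\mu(\TT))+i\Im F(0)$ need not vanish, which is all the argument requires; this does not affect the validity of your proof.
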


As remarked in \cite{BS67}, this result is no longer true if one omits  the condition $f(0)=0$. 
We shall return to this point in \S\ref{S:basept} below.
Motivated by this result, we make the following definition.

\begin{definition}\label{D:Wsp}
Let $T\in B(H)$, let $S$ be a subset of $\CC$ and let $z_0\in S$. 
We say that $(S,z_0)$ is a {\em $W$-spectral pair} for $T$ if  $\sigma(T)\subset S$
and if, for every rational function $f$ with poles outside $\overline{S}$ and satisfying $f(z_0)=0$, 
we have
\[
w(f(T))\le \sup_{z\in S}|f(z)|.
\]
\end{definition}

Thus Theorem~\ref{T:BSK} just says that 
$(\overline{\DD},0)$ is a $W$-spectral pair for $T$ if $w(T)\le1$.

Our main result is the following analogue of Theorem~\ref{T:ss}.

\begin{theorem}\label{T:Wsp}
Let $\Omega$ be a Jordan domain, let $z_0\in\Omega$,
and let $T\in B(H)$ with $\sigma(T)\subset \Omega$. 
The following  statements are equivalent:
\begin{enumerate}[\rm(i)]
\item $(\overline{\Omega},z_0)$ is a $W$-spectral pair for $T$.
\item $\Re (h(T))\ge -I$ for all 
conformal maps $h$ of $\Omega$ onto the right half-plane with $h(z_0)=1$.
\item There exist a Hilbert space $K$ containing $H$ and a normal operator $N\in B(K)$ such that
$\sigma(N)\subset\partial\Omega$ and such that, 
for every rational function $f$ with poles outside $\overline{\Omega}$
and satisfying $f(z_0)=0$, 
we have $f(T)=2P_Hf(N)|_H$.
\end{enumerate}
\end{theorem}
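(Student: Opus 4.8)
The plan is to transport everything to the unit disk by the Riemann map and to prove the cycle (iii)$\Rightarrow$(i)$\Rightarrow$(ii)$\Rightarrow$(iii). Throughout I would fix a conformal map $\phi:\Omega\to\DD$ with $\phi(z_0)=0$; since $\Omega$ is a Jordan domain, Carath\'eodory's theorem extends $\phi$ to a homeomorphism of $\overline{\Omega}$ onto $\overline{\DD}$ carrying $\partial\Omega$ onto $\TT$. The one analytic fact I would isolate first is the resolvent description of the numerical radius: for $A\in B(H)$ with spectral radius $<1$,
\[
w(A)\le1 \iff \Re\big((I-zA)^{-1}\big)\ge0 \quad\text{for all } z\in\DD.
\]
This follows by writing $u=(I-zA)^{-1}x$ and computing $\Re\langle(I-zA)^{-1}x,x\rangle=\|u\|^2-\Re(z\langle Au,u\rangle)$, then taking the supremum over $z\in\DD$, which produces $\|u\|^2-|\langle Au,u\rangle|$. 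When the spectral radius is $<1$ the same computation is valid for all $|z|\le1$.

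For (iii)$\Rightarrow$(i), fix $f$ with $f(z_0)=0$ and poles outside $\overline{\Omega}$, put $M:=\sup_{\overline{\Omega}}|f|$, and for $z\in\DD$ set $g:=zf/M$, so that $g(z_0)=0$ and $\|g\|_{\overline{\Omega}}=|z|<1$. Applying the hypothesis to each power $g^n$ (which again vanishes at $z_0$) and summing the Neumann series, which converges because $\|g(N)\|<1$ and $g(T)$ has spectral radius $<1$, gives the identity
\[
(I-g(T))^{-1}=2P_H(I-g(N))^{-1}|_H-I .
\]
Because $N$ is normal with $\sigma(N)\subset\partial\Omega$ and $\|g(N)\|<1$, the pointwise estimate $\Re\frac{1}{1-\mu}\ge\frac12$ for $|\mu|\le1$ yields $\Re((I-g(N))^{-1})\ge\frac12 I$, whence $\Re((I-g(T))^{-1})\ge0$. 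By the characterization above, applied to $A=f(T)/M$, this gives $w(f(T))\le M$, which is exactly (i). The point worth stressing is that the factor $2$ in (iii) is precisely what the bound $\Re\frac{1}{1-\mu}\ge\frac12$ compensates, turning the naive estimate $w(f(T))\le2M$ into the sharp $w(f(T))\le M$.

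For (i)$\Rightarrow$(ii), given a conformal $h:\Omega\to\{\Re w>0\}$ with $h(z_0)=1$, I would pass to the Cayley transform $g:=(h-1)/(h+1)$, which maps $\Omega$ into $\DD$, satisfies $g(z_0)=0$, and extends continuously to $\overline{\Omega}$ with $|g|\le1$ there. After approximating $g$ uniformly on $\overline{\Omega}$ by rational functions vanishing at $z_0$ (Mergelyan, as $\CC\setminus\overline{\Omega}$ is connected), the $W$-spectral hypothesis gives $w(g(T))\le1$, hence $\Re((I-g(T))^{-1})\ge0$ since $g(T)$ has spectral radius $<1$. The algebraic identity $h(T)+I=2(I-g(T))^{-1}$ then delivers $\Re(h(T))\ge -I$.

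The main work is (ii)$\Rightarrow$(iii), and this is where I expect the real obstacle. First I would check that (ii) is equivalent to $w(\phi(T))\le1$: the conformal maps $h$ in (ii) are exactly $h_\psi=C_\psi\circ\phi$ with $C_\psi(w)=(1+e^{i\psi}w)/(1-e^{i\psi}w)$, and for these $h_\psi(T)+I=2(I-e^{i\psi}\phi(T))^{-1}$, so (ii) says precisely that $\Re((I-e^{i\psi}\phi(T))^{-1})\ge0$ for all $\psi$, i.e.\ $w(\phi(T))\le1$. With this in hand I would invoke Berger's unitary $2$-dilation \cite{Be63}: since $w(\phi(T))\le1$, there exist a Hilbert space $K\supset H$ and a unitary $U\in B(K)$ with $\phi(T)^n=2P_HU^n|_H$ for all $n\ge1$. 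I would then set $N:=\phi^{-1}(U)$ via the continuous functional calculus; as $\phi^{-1}$ is continuous on $\TT\supset\sigma(U)$ and carries $\TT$ onto $\partial\Omega$, the operator $N$ is normal with $\sigma(N)\subset\partial\Omega$. Finally, for rational $f$ with $f(z_0)=0$ and poles off $\overline{\Omega}$, I would put $F:=f\circ\phi^{-1}$, so that $F$ lies in the disk algebra with $F(0)=0$; approximating $F$ by polynomials vanishing at the origin and using Berger's relation termwise gives $F(\phi(T))=2P_HF(U)|_H$, and the composition rules for the functional calculi turn this into $f(T)=2P_Hf(N)|_H$. The delicate points are the boundary regularity supplied by Carath\'eodory's theorem and the justification of the compositions $F(U)=f(N)$ and $F(\phi(T))=f(T)$; these, together with the correct invocation of Berger's theorem, are the crux, whereas the other two implications are essentially formal once the resolvent characterization is in place.
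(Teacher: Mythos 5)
Your proof is correct, and while your implications (iii)$\Rightarrow$(i) and (i)$\Rightarrow$(ii) essentially mirror the paper's arguments (the Neumann-series derivation of $(I-g(T))^{-1}=2P_H(I-g(N))^{-1}|_H-I$ is the same identity the paper obtains by applying (iii) directly to $f/(1-\alpha f)$, and the Cayley-transform-plus-Mergelyan step is identical), your treatment of the key implication (ii)$\Rightarrow$(iii) is genuinely different. The paper constructs the dilation intrinsically on the Jordan domain: it proves a Herglotz-type representation $f(T)=\int_{\partial\Omega}h_\zeta(T)\,\Re f(\zeta)\,d\omega_{z_0}(\zeta)$, reads off from (ii) that $F(B):=\tfrac12\int_B\Re(h_\zeta(T)+I)\,d\omega_{z_0}(\zeta)$ is a positive operator measure on $\partial\Omega$, and invokes Naimark's dilation theorem to produce the spectral measure of $N$. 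You instead observe that (ii) is conformally equivalent to $w(\phi(T))\le1$ and then import Berger's strange dilation theorem for the disk as a black box, transporting the resulting unitary back to $\partial\Omega$ by the continuous functional calculus. Your reduction is clean and correct (the identity $h_\psi(T)+I=2(I-e^{i\psi}\phi(T))^{-1}$, the spectral mapping $\sigma(\phi^{-1}(U))\subset\partial\Omega$, and the compositions $F(U)=f(N)$, $F(\phi(T))=f(T)$ all hold), and it makes the conformal invariance of the problem transparent; what it gives up is self-containedness. The paper's route proves the disk case rather than assuming it --- indeed the authors point out that Berger's theorem \emph{is} the special case $\Omega=\DD$, $z_0=0$ of the equivalence (ii)$\Leftrightarrow$(iii) --- so under your argument that corollary becomes circular, and the substantive dilation-theoretic content is outsourced to the literature. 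You also take on some bookkeeping the paper avoids, namely reconciling the holomorphic functional calculus for $T$ and $\phi(T)$ with the continuous functional calculus for $U$ and $N$; your proof is fine on these points, but they are exactly the places where a reader would want the details written out.
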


Once again, let us consider what this theorem tells us 
in the case when $\Omega=\DD$ and $z_0=0$.
Condition~(ii) now becomes 
$\Re((I+e^{-i\theta} T)(I-e^{-i\theta} T)^{-1})\ge -I$ for all $e^{i\theta}\in\TT$.
A computation similar to the one performed earlier shows that 
this is equivalent to $w(T)\le1$.
Thus, thanks to the equivalence between (i) and (ii), 
we recover the Berger--Stampfli--Kato theorem that 
$(\overline{\DD},0)$ is a $W$-spectral pair for $T$ if $w(T)\le 1$.
Also the equivalence with (iii) shows that, if $w(T)\le1$, 
then $T$ has a $2$-unitary dilation in the sense that there exists a
unitary operator $U$ on a Hilbert space $K$ containing $H$ such that  
$T^n=2P_HU^n|_H$ for all $n\ge1$.
This is the so-called `strange dilation theorem' of Berger \cite{Be63,BS67}.

The proof of Theorem~\ref{T:Wsp} will be presented in \S\ref{S:pfWsp}.

\subsection{The role of the base point}\label{S:basept}

As remarked earlier, 
Theorem~\ref{T:BSK} only works if $f(0)=0$.
This explains the presence of the base point $z_0$ in Definition~\ref{D:Wsp}.
We now make some further comments on this phenomenon 
and discuss some examples.

The sharp version of Theorem~\ref{T:BSK} in the case $f(0)\ne0$ 
was found by Drury \cite{Dr08}.
He proved that, if $W(T)\subset\overline{\DD}$ 
and $f$ is a rational function mapping $\overline{\DD}$ into $\overline{\DD}$,
then the numerical range of $f(T)$ is contained in $t(f(0))$,
where $t(a)$ is the `teardrop' set formed by 
taking the  convex hull of the union of closed disks
$\overline{D}(0,1)\cup \overline{D}(a,1-|a|^2))$.
For another proof, see \cite{KMR16}. 
The following theorem is an easy consequence of Drury's result.

\begin{theorem}
Let $T\in B(H)$ and let $(S,z_0)$ be a $W$-spectral pair for $T$.
If $f$ is a rational function such that $f(S)\subset\overline{\DD}$, 
then $W(f(T))\subset t(f(z_0))$.
\end{theorem}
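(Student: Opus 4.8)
The plan is to reduce the statement to Drury's theorem by pre-composing $f$ with a Möbius automorphism of $\DD$ that carries $f(z_0)$ to the origin. This converts the $W$-spectral pair hypothesis on $(S,z_0)$ into exactly the numerical-range hypothesis $W(\,\cdot\,)\subseteq\overline{\DD}$ required by Drury's result, after which the teardrop $t(f(z_0))$ appears automatically as the image of the origin.

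Concretely, I would set $a:=f(z_0)$; since $z_0\in S$ and $f(S)\subseteq\overline{\DD}$ we have $a\in\overline{\DD}$, and I would first treat the main case $|a|<1$. Let $\tau_a(w):=(w-a)/(1-\overline{a}w)$ be the disk automorphism with $\tau_a(a)=0$, which maps $\overline{\DD}$ onto $\overline{\DD}$ and has inverse $\tau_{-a}$, so that $\tau_{-a}(0)=a$. Put $h:=\tau_a\circ f$. Then $h$ is rational, $h(z_0)=0$, and $h(S)=\tau_a(f(S))\subseteq\overline{\DD}$; moreover its poles lie outside $\overline{S}$, since the only new poles occur where $f=1/\overline{a}$, and $|1/\overline{a}|=1/|a|>1\ge|f|$ on $\overline{S}$ (note $f$ has no poles on $\overline{S}$, as a pole there would be approached by points of $S$ with $|f|\to\infty$, contradicting $f(S)\subseteq\overline{\DD}$).

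Next I would invoke the $W$-spectral pair property of $(S,z_0)$: because $h$ has poles outside $\overline{S}$ and $h(z_0)=0$, it yields $w(h(T))\le\sup_{z\in S}|h(z)|\le1$, i.e.\ $W(h(T))\subseteq\overline{\DD}$. Writing $B:=h(T)$, I would observe that $f=\tau_{-a}\circ h$ as rational functions, so by the composition property of the rational functional calculus—legitimate because the pole $-1/\overline{a}$ of $\tau_{-a}$ lies off $\sigma(B)=h(\sigma(T))\subseteq\overline{\DD}$, making $I+\overline{a}\,h(T)$ invertible—one has $f(T)=\tau_{-a}(B)$. Applying Drury's theorem to the operator $B$, whose numerical range lies in $\overline{\DD}$, and to the self-map $\tau_{-a}$ of $\overline{\DD}$ gives
\[
W(f(T))=W(\tau_{-a}(B))\subseteq t(\tau_{-a}(0))=t(a)=t(f(z_0)),
\]
as required.

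There remains the degenerate boundary case $|a|=1$, where $\tau_a$ is no longer an automorphism and $t(a)$ collapses to $\overline{\DD}$; this is the only point that needs extra care. Here I would apply the case already proved to $rf$ for $r<1$: since $(rf)(z_0)=ra$ has modulus $r<1$, we obtain $r\,W(f(T))=W(rf(T))\subseteq t(ra)$, and letting $r\to1^{-}$ and using that the teardrops $t(ra)$ shrink to $\overline{\DD}=t(a)$ gives $W(f(T))\subseteq t(a)$. The conceptual content—the automorphism reduction to Drury's theorem—is otherwise routine; the only real bookkeeping lies in checking the pole locations and the invertibility above so that $h$ is admissible in the definition of a $W$-spectral pair and the identity $f(T)=\tau_{-a}(h(T))$ is valid.
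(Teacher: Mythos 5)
Your proof is correct and follows essentially the same route as the paper: factor $f$ through a M\"obius automorphism of the disk sending $f(z_0)$ to $0$, use the $W$-spectral pair hypothesis on the composed map vanishing at $z_0$ to get $W(h(T))\subseteq\overline{\DD}$, and then apply Drury's theorem to the automorphism; the boundary case $|f(z_0)|=1$ is handled by the same $rf$, $r\to1^-$ device the paper uses. The only difference is cosmetic (you use $\tau_a$ and its inverse $\tau_{-a}$ where the paper uses the involutive automorphism exchanging $0$ and $f(z_0)$), and you spell out the pole-location and functional-calculus checks that the paper leaves implicit.
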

 
\begin{proof}
We may assume that $|f(z_0)|<1$ 
(otherwise work with $rf$ where $r<1$, and then let $r\to1^-$ at the end).
Let $\phi$ be the  M\"obius automorphism of $\overline{\DD}$ 
that exchanges $0$ and $f(z_0)$.
Then $f=\phi\circ f_1$, 
where $f_1$ is rational, maps $\overline{\DD}$ into $\overline{\DD}$, and,
in addition, sends $0$ to $0$. 
By definition of $W$-spectral set, 
it follows that $W(f_1(T))\subset\overline{\DD}$.
By Drury's theorem applied to $\phi$, 
we deduce that $W(\phi(f_1(T)))\subset t(\phi(0))$, 
in other words, 
that $W(f(T))\subset t(f(z_0))$, as desired.
\end{proof}
 
When $S$ is not a disk, 
then  the condition that $W(T)\subset S$ 
may no longer be sufficient to guarantee that
$(S,z_0)$ be a $W$-spectral pair for $T$, 
even if $z_0$ is a `central' point of $S$. 
This is illustrated by the next theorem.

\begin{theorem}\label{T:ellipse}
Let $S:=\{x+iy:(x^2/a^2)+(y^2/b^2)\le1\}$, where $a>b>0$. 
Then there exists a $2\times 2$ matrix $T$ such that $W(T)\subset S$, 
but $(S,0)$ is not a $W$-spectral pair for $T$.
\end{theorem}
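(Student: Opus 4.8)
The plan is to exhibit an explicit $2\times2$ matrix whose numerical range is exactly $S$, together with a polynomial vanishing at $0$ that violates the defining inequality. Set $e:=\sqrt{a^2-b^2}$, the focal distance of $S$, and take
\[
T_0:=\begin{pmatrix} e & 2b \\ 0 & -e\end{pmatrix}.
\]
By the elliptical range theorem, $W(T_0)$ is the closed elliptical disc with foci $\pm e$ and minor axis $2b$, which is precisely $S$; in particular $W(T_0)\subset S$ and $\sigma(T_0)=\{\pm e\}\subset\Omega$. I would also record at the outset that $w(T_0)=\max_{z\in S}|z|=a$, attained at the endpoints $\pm a$ of the major axis (this uses $a>b$).

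The key simplification is to test only with \emph{odd} functions. Since $T_0$ is diagonalizable with eigenvalues $\pm e$, any odd rational $f$ satisfies $f(-e)=-f(e)$, and a direct computation in the eigenbasis gives $f(T_0)=\tfrac{f(e)}{e}T_0$. Consequently $w(f(T_0))=\tfrac{|f(e)|}{e}\,w(T_0)=\tfrac{a}{e}|f(e)|$, while $f(0)=0$ holds automatically. Thus $(S,0)$ will fail to be a $W$-spectral pair as soon as I can produce an odd rational $f$ with $\tfrac{a}{e}|f(e)|>\sup_{z\in S}|f|$, that is, with $|f(e)|/\sup_{S}|f|>e/a$.

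The heart of the matter is to show that the extremal value of this ratio strictly exceeds $e/a$, and I expect this to be the main point of the argument. Let $\varphi\colon\Omega\to\DD$ be the Riemann map normalised by $\varphi(0)=0$ and $\varphi'(0)>0$; by the symmetry of the ellipse it is odd and maps the real axis to the real axis. Comparing $\varphi$ with the contraction $z\mapsto z/a$, which maps $\Omega$ into $\DD$ and fixes $0$ because $\Omega\subset D(0,a)$, the Schwarz lemma applied to $(z\mapsto z/a)\circ\varphi^{-1}$ yields $e/a\le|\varphi(e)|$; the inequality is strict since $\Omega\subsetneq D(0,a)$ forces $z\mapsto z/a$ to be non-surjective onto $\DD$. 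This strict inequality $|\varphi(e)|>e/a$ is exactly where the hypothesis $a>b$ is used.

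It remains to pass from $\varphi$ to a genuine rational function. Since $\partial\Omega$ is an analytic curve, $\varphi$ extends holomorphically past $\overline{\Omega}$ and can be approximated uniformly on $\overline{\Omega}$ by polynomials; replacing each approximant by its odd part, legitimate because $\overline{\Omega}$ is symmetric about $0$, produces odd polynomials $f_n\to\varphi$ uniformly on $S$. Then $f_n(e)\to\varphi(e)$ and $\sup_{S}|f_n|\to\sup_{S}|\varphi|=1$, so that $\tfrac{a}{e}|f_n(e)|/\sup_{S}|f_n|\to\tfrac{a}{e}|\varphi(e)|>1$. For $n$ large this ratio exceeds $1$, giving an odd polynomial $f$ with $f(0)=0$ and $w(f(T_0))>\sup_{z\in S}|f|$. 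Hence $(S,0)$ is not a $W$-spectral pair for $T_0$, as required.
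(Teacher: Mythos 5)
Your proposal is correct and follows essentially the same route as the paper: the same Crouzeix matrix, the same reduction $f(T_0)=(f(e)/e)\,T_0$ for odd $f$, the same Schwarz-lemma comparison with $z\mapsto z/a$ to obtain $|\varphi(e)|>e/a$, and the same polynomial approximation to convert the conformal map into an admissible rational test function. The only cosmetic differences are that you deduce oddness of the Riemann map from symmetry and uniqueness rather than the paper's explicit Schwarz-lemma argument, and you compute $f(T_0)$ in the eigenbasis rather than via $T_0^2=e^2I$.
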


The proof of this theorem, 
which is based on an example of Michel Crouzeix,
will be presented in \S\ref{S:ellipse}.

\section{Proof of Theorem~\ref{T:Wsp}}\label{S:pfWsp}

We require two lemmas. 
The first one is a Herglotz-type representation formula.

\begin{lemma}\label{L:Herglotz}
Let $\Omega$ be a Jordan domain and let $z_0\in\Omega$.
Let $T\in B(H)$ with $\sigma(T)\subset\Omega$.
Then, for every function $f$ continuous on $\overline{\Omega}$ and 
holomorphic on $\Omega$ with $f(z_0)\in\RR$, we have
\begin{equation}\label{E:Herglotz}
f(T)=\int_{\partial\Omega}h_\zeta(T)\Re f(\zeta) \,d\omega_{z_0}(\zeta),
\end{equation}
where $\omega_{z_0}$ denotes the harmonic measure of $\Omega$ 
relative to the point $z_0$ and,
for each $\zeta\in\partial\Omega$, the function
$h_\zeta$ is the unique conformal mapping of $\Omega$ 
onto the right half-plane satisfying
$h_\zeta(z_0)=1$ and $\lim_{z\to\zeta}|h_\zeta(z)|=\infty$.
\end{lemma}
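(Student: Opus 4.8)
The plan is to prove the scalar version of \eqref{E:Herglotz} first---namely that $f(z)=\int_{\partial\Omega}h_\zeta(z)\Re f(\zeta)\,d\omega_{z_0}(\zeta)$ for every $z\in\Omega$---and then to lift it to the operator level through the Riesz--Dunford functional calculus. The scalar identity is nothing other than the classical Herglotz (Schwarz) representation transplanted from the disk to $\Omega$, so the heart of the matter is a careful change of variables by a conformal map, after which the operator statement follows by substituting the scalar formula into the Cauchy integral defining $f(T)$ and exchanging the order of integration.

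For the scalar step I would fix a conformal map $\varphi$ of $\DD$ onto $\Omega$ with $\varphi(0)=z_0$; since $\Omega$ is a Jordan domain, Carath\'eodory's theorem extends $\varphi$ to a homeomorphism of $\overline{\DD}$ onto $\overline{\Omega}$. Writing $\zeta=\varphi(\eta)$ with $\eta\in\TT$, the composition $h_\zeta\circ\varphi$ is a conformal map of $\DD$ onto the right half-plane sending $0$ to $1$ with a pole at $\eta$, so by uniqueness $h_\zeta(\varphi(w))=(\eta+w)/(\eta-w)$. Conformal invariance of harmonic measure identifies the pushforward of normalized Lebesgue measure on $\TT$ with $\omega_{z_0}$. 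Setting $g:=f\circ\varphi$, which is continuous on $\overline{\DD}$, holomorphic on $\DD$, and satisfies $g(0)=f(z_0)\in\RR$, the desired scalar identity is exactly the Schwarz formula $g(w)=\int_{\TT}\frac{\eta+w}{\eta-w}\Re g(\eta)\,dm(\eta)$, which is valid because $\Im g(0)=0$.

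To pass to operators I would use $\sigma(T)\subset\Omega$ to write $f(T)=\frac{1}{2\pi i}\int_\Gamma f(z)(zI-T)^{-1}\,dz$ over a cycle $\Gamma$ enclosing $\sigma(T)$ inside $\Omega$. Substituting the scalar representation of $f(z)$, and noting that the kernel $(\zeta,z)\mapsto h_\zeta(z)$ is jointly continuous on the compact set $\partial\Omega\times\Gamma$ (via the explicit formula above), I would invoke Fubini to interchange the integral over $\partial\Omega$ with the Cauchy integral. The inner integral $\frac{1}{2\pi i}\int_\Gamma h_\zeta(z)(zI-T)^{-1}\,dz$ is then precisely $h_\zeta(T)$, since $h_\zeta$ is holomorphic on a neighbourhood of $\sigma(T)$ (its only pole sits at $\zeta\in\partial\Omega$), which yields \eqref{E:Herglotz}.

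The main obstacle I anticipate is the bookkeeping that legitimizes the interchange of integration: one must check the joint continuity of $(\zeta,z)\mapsto h_\zeta(z)$ off the diagonal and verify that $\zeta\mapsto h_\zeta(T)$ is a bounded, $\omega_{z_0}$-integrable $B(H)$-valued map, so that the right-hand side of \eqref{E:Herglotz} is a genuine Bochner integral. Both facts follow from the explicit expression $h_\zeta(\varphi(w))=(\eta+w)/(\eta-w)$ together with the observation that $\Gamma$ stays a positive distance from $\partial\Omega$, but this is the step requiring the most care.
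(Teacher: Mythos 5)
Your proposal is correct and follows essentially the same route as the paper: transplant the classical disk representation to $\Omega$ via a conformal map (using Carath\'eodory's extension and the explicit formula $h_\zeta(\varphi(w))=(\eta+w)/(\eta-w)$), then pass to $f(T)$ through the holomorphic functional calculus. The only differences are cosmetic --- you invoke the Schwarz integral formula directly where the paper uses the Poisson formula for $\Re f$ followed by a harmonic-conjugate argument, and you make explicit the Fubini interchange in the Riesz--Dunford integral that the paper leaves implicit.
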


\begin{proof}
Let $g$ be a conformal mapping of $\Omega$ onto $\DD$ 
such that $g(z_0)=0$.
As $\Omega$ is a Jordan domain, 
$g$ extends to a homeomorphism of $\overline{\Omega}$ onto $\overline{\DD}$.
The function $\Re (f\circ g^{-1})$ is continuous on $\overline{\DD}$ 
and harmonic on $\DD$, 
so it satisfies Poisson's formula:
\[
\Re (f\circ g^{-1})(w)=
\int_\TT \Re\Bigl(\frac{e^{i\theta}+w}{e^{i\theta}-w}\Bigr)\Re(f\circ g^{-1})(e^{i\theta})\,\frac{d\theta}{2\pi}
\quad(w\in\DD).
\]
After the change of variables $z:=g^{-1}(w)$ and $\zeta:=g^{-1}(e^{i\theta})$,
Lebesgue measure on $\TT$ is transformed into 
harmonic measure on $\partial\Omega$ with respect to $z_0$,
and this equation becomes
\begin{align*}
\Re f(z)
&=\int_{\partial\Omega} \Re\Bigl(\frac{g(\zeta)+g(z)}{g(\zeta)-g(z)}\Bigr)\Re f(\zeta)\,d\omega_{z_0}(\zeta)\\
&=\int_{\partial\Omega}\Re h_\zeta(z)\Re f(\zeta)\,d\omega_{z_0}(\zeta)
\quad(z\in\Omega).
\end{align*}
It follows that
\[
f(z)=\int_{\partial\Omega}h_\zeta(z)\Re f(\zeta) \,d\omega_{z_0}(\zeta)\quad(z\in\Omega).
\] 
Indeed, the two sides of this last equation are holomorphic functions of $z$ 
having identical real parts,
so they differ by a constant, 
and as their imaginary parts vanish at $z=z_0$, 
the constant is zero.
Finally, applying both sides of the equation to $T$ 
via the holomorphic functional calculus,
we deduce that \eqref{E:Herglotz} holds.
\end{proof}

The second lemma is a version of Naimark's dilation theorem.
Let $X$ be a compact metric space and let $\cB(X)$ denote the Borel subsets of $X$. 
A {\em positive operator measure} on $X$ is a map $F:\cB(X)\to B(H)$ such that:
\begin{itemize}
\item $F(B)$ is a positive operator for each $B\in\cB(X)$;
\item $F(X)=I$;
\item for each pair $x,y\in H$, the map $B\mapsto \langle F(B)x,y\rangle$ is countably additive.
\end{itemize}
If further $F(B)$ is a projection for each $B\in\cB(X)$, 
then $F$ is called a {\em spectral measure}.
For general background on spectral measures, we refer to \cite{Ha57} and \cite{Pa02}.

\begin{lemma}\label{L:Naimark}
Let $X$ be a compact metric space, 
and let $F:\cB(X)\to B(H)$ be a positive operator measure.
Then there exist a Hilbert space $K$ containing $H$ 
and a spectral measure $E:\cB(X)\to B(K)$ such that
$F(B)=P_HE(B)|_H$ for all $B\in\cB(X)$.
\end{lemma}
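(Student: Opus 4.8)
The plan is to carry out the standard Kolmogorov/GNS-type construction that realises the positive operator measure $F$ as a compression of a spectral measure. First I would introduce the vector space $\mathcal{V}$ spanned by the elementary tensors $\chi_B\otimes x$ with $B\in\cB(X)$ and $x\in H$ (thought of as $H$-valued simple Borel functions), and equip it with the sesquilinear form determined by
\[
\langle \chi_B\otimes x,\chi_C\otimes y\rangle := \langle F(B\cap C)x,y\rangle_H.
\]
The first thing to check is that this form is positive semi-definite: after refining to a common disjoint partition $\{B_i\}$, any element takes the shape $u=\sum_i \chi_{B_i}\otimes x_i$, and then $\langle u,u\rangle=\sum_i \langle F(B_i)x_i,x_i\rangle\ge0$ because each $F(B_i)$ is a positive operator. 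I would then quotient out the null space $\{u:\langle u,u\rangle=0\}$ and complete to obtain a Hilbert space $K$.

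Next I would embed $H$ into $K$ via $x\mapsto [\chi_X\otimes x]$; this is isometric because $F(X)=I$ forces $\langle \chi_X\otimes x,\chi_X\otimes x\rangle=\langle x,x\rangle$. The candidate spectral measure is defined on simple vectors by $E(B)[\chi_C\otimes x]:=[\chi_{B\cap C}\otimes x]$. To see that $E(B)$ is well defined and bounded on the quotient, I would use the monotonicity $0\le F(B\cap C)\le F(C)$, which follows from positivity of $F$ together with $F(C)=F(B\cap C)+F(C\setminus B)$; working with disjoint partitions this yields $\langle E(B)u,E(B)u\rangle\le\langle u,u\rangle$, so $E(B)$ is a contraction. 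A direct computation then shows that $E(B)$ is idempotent and self-adjoint, hence an orthogonal projection, and that $E(X)=I$ on the dense subspace. Finally, for the compression identity, for $x,y\in H$ one computes
\[
\langle P_HE(B)|_Hx,y\rangle=\langle [\chi_B\otimes x],[\chi_X\otimes y]\rangle=\langle F(B)x,y\rangle,
\]
giving $F(B)=P_HE(B)|_H$ as required.

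The main obstacle I anticipate is verifying countable additivity of $E$: given disjoint Borel sets $B_n$ with union $B$, one must show $E(B)=\sum_n E(B_n)$ in the strong operator topology. Since the $E(B_n)$ are mutually orthogonal projections, it suffices to test against the dense family of simple vectors, where the claim reduces to the weak countable additivity of $F$ that is built into its definition; some care is needed to pass from weak additivity of $F$ on $H$ to strong additivity of $E$ on all of $K$. An alternative, more structural route would be to regard $\Phi(\varphi):=\int_X\varphi\,dF$ as a unital positive---hence, the domain $C(X)$ being commutative, automatically completely positive---map, apply Stinespring's dilation theorem to obtain a $*$-representation of $C(X)$, and then invoke the correspondence between representations of $C(X)$ and spectral measures on $X$; but the direct construction above keeps everything self-contained and matches the references \cite{Ha57,Pa02}.
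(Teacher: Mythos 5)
Your proposal is correct. Note, though, that the paper does not prove this lemma at all: its ``proof'' is a one-line citation to Paulsen's book (Theorem~4.6 there), where the result is obtained by viewing $\varphi\mapsto\int_X\varphi\,dF$ as a positive, hence completely positive, map on the commutative C*-algebra $C(X)$ and applying Stinespring's dilation theorem --- exactly the ``alternative, more structural route'' you describe at the end. Your primary argument is instead the direct Kolmogorov/GNS-type construction on $H$-valued simple functions, which is the other standard proof of Naimark's theorem; it has the advantage of being self-contained and of making the compression identity and the projections $E(B)$ completely explicit, at the cost of the bookkeeping you correctly identify (well-definedness of $E(B)$ on the quotient, and countable additivity of $E$). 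Two small points worth making explicit if you write this out in full: first, the sesquilinear form must be checked to be well defined on the space of simple functions, i.e.\ compatible with relations such as $\chi_{A\cup A'}\otimes x=\chi_A\otimes x+\chi_{A'}\otimes x$ for disjoint $A,A'$; this is exactly finite additivity of $F$, and your device of passing to a common disjoint refinement handles it. Second, your positivity computation uses $F(\emptyset)=0$ (so that the cross terms $F(B_i\cap B_j)$ vanish), which follows from additivity. For the countable additivity of $E$, the passage from weak to strong convergence is harmless: the partial sums $\sum_{n\le N}E(B_n)$ are projections dominated by $E(B)$, so
\[
\Bigl\|\Bigl(E(B)-\sum_{n\le N}E(B_n)\Bigr)u\Bigr\|^2=\Bigl\langle\Bigl(E(B)-\sum_{n\le N}E(B_n)\Bigr)u,u\Bigr\rangle\to0
\]
once weak convergence is known on the dense set of simple vectors, where it reduces to the assumed countable additivity of $B\mapsto\langle F(B)x,y\rangle$. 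With these details filled in, your construction is a complete and valid proof of the lemma.
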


\begin{proof}
See for example  \cite[Theorem~4.6]{Pa02}.
\end{proof}

\begin{proof}[Proof of Theorem~\ref{T:Wsp}]
First we prove the implication (ii)$\Rightarrow$(iii).
Assume that (ii) holds. 
Adopting the notation of Lemma~\ref{L:Naimark}, 
let us define $F:\cB(\partial\Omega)\to B(H)$ by
\[
F(B):=\frac{1}{2}\int_B \Re(h_\zeta(T)+I)\,d\omega_{z_0}(\zeta)
\quad(B\in\cB(\partial\Omega)).
\]
By hypothesis (ii) 
we have $\Re(h_\zeta(T)+I)\ge0$ for each $\zeta\in\partial\Omega$,
so $F$ takes positive operator values.  
Further, applying Lemma~\ref{L:Herglotz} with $f=1$,
we see that $\int_{\partial\Omega}h_\zeta(T)\,d\omega_{z_0}(\zeta)=I$,
whence also $F(X)=I$. 
Clearly, $B\mapsto \langle F(B)x,y\rangle$ is countably additive for each pair $x,y\in H$.
Therefore $F$ is a positive operator measure on $\partial\Omega$.
By Lemma~\ref{L:Naimark}, 
there exist a Hilbert space $K$ containing $H$ 
and a spectral measure $E:\cB(\partial\Omega)\to B(K)$ 
such that $F(B)=P_HE(B)|_H$ for all $B\in\cB(\partial \Omega)$.
Define $N\in B(K)$ by
\[
N:=\int_{\partial\Omega}\zeta\,dE(\zeta),
\]
where now the integral converges in the strong operator topology on $B(K)$. 
Then $N$ is a normal operator and $\sigma(N)\subset\partial\Omega$.
Further, if $f$ is a rational function with poles outside $\overline{\Omega}$ 
such that $f(z_0)=0$, then
\[
2P_Hf(N)|_H
=2P_H\Bigl(\int_{\partial\Omega}f(\zeta)\,dE(\zeta)\Bigr)|_H
=\int_{\partial\Omega}f(\zeta)\Re(h_\zeta(T)+I)\,d\omega_{z_0}(\zeta).
\]
Hence
\[
2P_H(\Re f(N))|_H=\int_{\partial\Omega}\Re f(\zeta)(\Re(h_\zeta(T)+I)\,d\omega_{z_0}(\zeta)=\Re f(T),
\]
where the last equality comes from Lemma~\ref{L:Herglotz}.
Applying the same argument to $if$ and then adding, 
we obtain finally that $2P_Hf(N)|_H=f(T)$. 
Thus property (iii) holds.

Next we prove the implication (iii)$\Rightarrow$(i).
Assume that (iii) holds. 
Let $f$ be a rational function such that $f(\overline{\Omega})\subset\overline{\DD}$ and $f(z_0)=0$.
Then, for each $\alpha\in\DD$, 
the function $ f/(1-\alpha f)$ is rational with poles outside $\overline{\Omega}$
and vanishes at $z_0$,
so by hypothesis~(iii)
\[
 f(T)(I-\alpha f(T))^{-1}
 =2P_H( f(N)(I-\alpha f(N))^{-1})|_H.
\]
Rearranging gives
\[
(I-\alpha f(T))^{-1}
=P_H((I+\alpha f(N))(I-\alpha f(N))^{-1})|_H,
\]
whence, in particular, $\Re((I-\alpha f(T))^{-1})\ge0$. 
It follows that $\Re(I-\alpha f(T))\ge0$, 
and as this holds for all $\alpha\in\DD$,
we conclude that $w(f(T))\le1$. 
This shows that $(\overline{\Omega},z_0)$ is a $W$-spectral pair
and establishes property (i).

Lastly we prove that (i)$\Rightarrow$(ii).
Assume  (i) holds. 
Let $h$ be a conformal mapping of $\Omega$ onto the right half-plane 
such that $h(z_0)=1$.
Define $g:=(1-h)/(1+h)$. 
Then $g$ is a conformal mapping of $\Omega$ onto $\DD$ such that $g(z_0)=0$.
As $\Omega$ is a Jordan domain, 
$g$ extends to a homeomorphism of $\overline{\Omega}$ onto $\overline{\DD}$.
By Mergelyan's theorem, 
there exist polynomials $(f_n)$ such that $f_n\to g$ uniformly on $\overline{\Omega}$.
Since $g(z_0)=0$ and $|g|\le 1$ on $\overline{\DD}$, 
by adjusting the $f_n$ we can arrange that
$f_n(z_0)=0$ and $|f_n|\le1$ on $\overline{\DD}$ for all $n$. 
By the hypothesis (i),  
$(\overline{\Omega},z_0)$ is a $W$-spectral pair for $T$, 
so $w(f_n(T))\le 1$ for all $n$.
Letting $n\to\infty$, we obtain $w(g(T))\le 1$. Hence
\[
0\le \Re (I-g(T))=\Re (I- (I-h(T))(I+h(T))^{-1})=2\Re((I+h(T))^{-1}),
\]
whence $\Re (I+h(T))\ge0$. Therefore property (ii) holds.
\end{proof}

\section{Proof of Theorem~\ref{T:ellipse}}\label{S:ellipse}

The proof of Theorem~\ref{T:ellipse} is based on the following example
due to Crouzeix \cite{Cr16}. 

\begin{lemma}\label{L:ellipse}
Let $a>b>0$, let $c:=\sqrt{a^2-b^2}$, 
and let $T$ be the $2\times 2$ matrix
\[ 
T:=\begin{pmatrix}
c & 2b\\
0 &-c
\end{pmatrix}.
\]
Then $\sigma(T)=\{-c,c\}$ and $W(T)=\overline{\Omega}$, where
\[
\Omega:=\Bigl\{x+iy: \frac{x^2}{a^2}+\frac{y^2}{b^2}<1\Bigr\}.
\]
Further, if $f:\Omega\to\DD$ is a conformal mapping such that $f(0)=0$,
then $w(f(T))>1$.
\end{lemma}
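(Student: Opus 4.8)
The plan is to dispatch the three assertions in turn. The spectrum is immediate: $T$ is upper triangular with diagonal entries $\pm c$, and since $a>b$ we have $c=\sqrt{a^2-b^2}>0$, so the two eigenvalues are distinct and $\sigma(T)=\{-c,c\}$. For the numerical range I would appeal to the elliptical range theorem for $2\times2$ matrices (see, e.g., \cite{GR97}), according to which $W(T)$ is the closed elliptical disc with foci at the eigenvalues $\pm c$ and minor axis of length $(\operatorname{tr}(T^*T)-|c|^2-|{-c}|^2)^{1/2}$. A short computation gives $\operatorname{tr}(T^*T)=2c^2+4b^2$, whence the minor axis has length $2b$ and the semi-axes are $b$ and $(b^2+c^2)^{1/2}=a$; the centre being $0$ and the foci real, this ellipse is exactly $\overline{\Omega}$. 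In particular $w(T)=a$, the largest modulus of a point of $\overline{\Omega}$.

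For the last assertion, the first move is a normalization. Since the numerical radius is unchanged under multiplication by a unimodular constant, and any two conformal maps of $\Omega$ onto $\DD$ fixing $0$ differ by such a factor, I may assume $f'(0)>0$. Then the symmetries of $\Omega$ (invariance under $z\mapsto-z$ and under conjugation) force $f$ to be odd and to carry $(-a,a)$ increasingly onto $(-1,1)$; in particular $f(c)\in(0,1)$ and $f(-c)=-f(c)$. Because $\sigma(T)=\{\pm c\}\subset\Omega$, the operator $f(T)$ is given by the holomorphic functional calculus, and diagonalizing $T=S\operatorname{diag}(c,-c)S^{-1}$ yields
\[
f(T)=S\operatorname{diag}\bigl(f(c),-f(c)\bigr)S^{-1}=\frac{f(c)}{c}\,S\operatorname{diag}(c,-c)S^{-1}=\frac{f(c)}{c}\,T.
\]
Hence $w(f(T))=\frac{f(c)}{c}\,w(T)=\frac{a\,f(c)}{c}$, and everything reduces to the single inequality $f(c)>c/a$.

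The crux is this inequality, and I would prove it by applying the Schwarz lemma to the inverse map, rather than trying to write down the (elliptic-function) conformal map explicitly. Set $g:=f^{-1}:\DD\to\Omega$. Since $\Omega$ is contained in the open disc of radius $a$ — its boundary meets the circle $|z|=a$ only at the two points $\pm a$ — the map $g/a$ sends $\DD$ into $\DD$ and fixes $0$. The Schwarz lemma then gives $|g(w)|\le a|w|$ for $w\in\DD$, and equality for some $w\ne0$ would force $g$ to be linear, hence $\Omega$ to be a disc; as $a>b$ this is impossible, so the inequality is strict. Taking $w=c/a\in(0,1)$ and using that $g$ is real and increasing on $(-1,1)$ gives $g(c/a)<c$, which, since $g(f(c))=c$, is precisely $f(c)>c/a$. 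Therefore $w(f(T))=a\,f(c)/c>1$, as required.

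The main obstacle is the final inequality $f(c)>c/a$: taken at face value it appears to demand the explicit conformal map of the ellipse onto the disc, which involves elliptic functions and would be awkward to estimate directly. The point of the argument is that this analytic difficulty evaporates once one passes to the inverse map and reads off the bound from the Schwarz lemma, the strictness coming for free from the fact that $\Omega$ is a genuine ellipse rather than a disc.
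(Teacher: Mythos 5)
Your proposal is correct and follows essentially the same route as the paper: establish that $f$ is odd, deduce that $f(T)=(f(c)/c)T$ so that $w(f(T))=a|f(c)|/c$, and obtain the strict inequality $|f(c)|>c/a$ by applying Schwarz's lemma to $f^{-1}/a$. The only differences are cosmetic (normalizing $f'(0)>0$ and invoking uniqueness of the Riemann map instead of the paper's direct Schwarz argument for oddness, and diagonalizing $T$ instead of using $T^2=c^2I$).
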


The identification of $\sigma(T)$ and $W(T)$ is standard  
(see  \cite[Example~3, pp 2--3]{GR97}).
The point of the lemma is the inequality $w(f(T))>1$.
Crouzeix's proof of this is quite complicated. 
It depends upon an explicit representation of $f$ 
as an infinite series involving Chebyshev polynomials. 
We give a simpler proof based on Schwarz's lemma.

\begin{proof}[Proof of Lemma~\ref{L:ellipse}]
Let us begin by observing that $f$ is an odd function.
Indeed, set $\phi:=m\circ f\circ m\circ f^{-1}$, where $m(z)=-z$.
Then $\phi$ is a conformal self-map of $\DD$ such that $\phi(0)=0$ and $\phi'(0)=1$, 
so by Schwarz's lemma $\phi(z)\equiv z$. Thus $f(-z)=-f(z)$.
 
As $f$ is odd, it can be written as $f(z)=zg(z^2)$, 
where $g$ is holomorphic on the set $\{z^2:z\in \Omega\}$. 
Since $T^2=c^2I$, it follows  that
\[
f(T)=Tg(T^2)=Tg(c^2I)=g(c^2)T=(f(c)/c)T.
\]
In particular, the numerical radius of $f(T)$ is given by 
\[
w(f(T))=|f(c)/c|w(T)=|f(c)|a/c.
\] 
We shall now prove that $|f(c)|a/c>1$.

Let $\psi:=h \circ f^{-1}$, where $h(z):=z/a$.
Then  $\psi$ maps $\DD$ to a proper subset of itself and fixes~$0$.
By Schwarz's lemma, 
we have $|\psi(z)|<|z|$ for all $z\in\DD\setminus\{0\}$. 
In particular, taking $z:=f(c)$, we obtain
$c/a<|f(c)|$, as claimed.
\end{proof}

\begin{proof}[Proof of Theorem~\ref{T:ellipse}]
We keep the notation of Lemma~\ref{L:ellipse}.
Since $\Omega$ is a Jordan domain with analytic boundary, 
$f$ extends to be holomorphic on a neighborhood of~$\overline{\Omega}$. 
By Runge's theorem there exist polynomials 
$f_n$ converging uniformly to $f$ on $\overline{\Omega}$. 
By adjusting the $f_n$, 
we can further arrange that $f_n(0)=0$ and $|f_n|\le1$ on $\overline{\Omega}$. 
Since $w(f(T))>1$, 
we have $w(f_n(T))>1$ for all large enough $n$. 
This shows that $(\overline{\Omega},0)$ is not a $W$-spectral pair for $T$, 
and completes the proof of the theorem.
\end{proof}

\begin{remarks}
(i) Let $T$ be any non-self-adjoint operator satisfying $T^2=I$. Then
$P:=(T+I)/2$ is a non-self-adjoint idempotent so, by \cite[Theorem~2.3]{SS10},
$W(P)$ is an ellipse with foci at $0,1$ and eccentricity $1/\|P\|<1$.
It follows that $W(T)$ is an ellipse of eccentricity $<1$ centred at $0$.
The proof of Theorem~\ref{T:ellipse} now shows that $(W(T),0)$ is not a $W$-spectral pair for $T$.

(ii) As remarked by one of the referees 
(in response to a question posed in an earlier version of the paper),
numerical experiments show that the phenomenon exhibited in Theorem~\ref{T:ellipse} 
also occurs if we replace the ellipse by a rectangle, or even by a square.
Specifically, writing $f$ for the conformal map of $(-1,1)^2$ onto $\DD$ such that $f(0)=0$,
there exists a $3\times 3$ matrix $T$ such that 
$W(T)\subset (-1,1)^2$ but $f(W(T))\not\subset\overline{\DD}$.
\end{remarks}

\begin{acknowledgement}
We are grateful to both the anonymous referees for their careful reading of the paper and their insightful comments.\end{acknowledgement}

\end{document}